\newtheorem{thm}{Theorem}[section]
\newtheorem{lem}[thm]{Lemma}
\newtheorem{prop}[thm]{Proposition}
\newtheorem{cor}[thm]{Corollary}
\newtheorem{theoremalph}{Theorem}
\theoremstyle{definition}
\newtheorem{rem}[thm]{Remark}
\newtheorem{defin}[thm]{Definition}
\def\paragraph{\@startsection{paragraph}{4}%
  \z@\z@{-\fontdimen2\font}%
  {\normalfont\bfseries}}
\begin{document}

\title[Tits alternative for Artin groups of type FC]{Tits alternative for Artin groups of type FC}

\author[A.~Martin]{Alexandre Martin$^{\dag*}$}
           \address{Department of Mathematics and the Maxwell Institute for Mathematical Sciences,
           Heriot-Watt University,
           Riccarton,
           EH14 4AS Edinburgh, United Kingdom}
           \email{alexandre.martin@hw.ac.uk}
           \thanks{$\dag$ Partially supported by EPSRC New Investigator Award EP/S010963/1.}

\author[P.~Przytycki]{Piotr Przytycki$^{\ddagger*}$}
\address{
Department of Mathematics and Statistics,
McGill University,
Burnside Hall,
805 Sherbrooke Street West,
Montreal, QC,
H3A 0B9, Canada}

\email{piotr.przytycki@mcgill.ca}
\thanks{$\ddagger$ Partially supported by NSERC and National Science Centre, Poland UMO-2018/30/M/ST1/00668.}
\thanks{$*$ This work was partially supported by
the grant 346300 for IMPAN from the Simons Foundation and the matching
2015--2019 Polish MNiSW fund}
\maketitle

\begin{abstract}
Given a group action on a finite-dimensional $\mathrm{CAT}(0)$ cube complex, we give a simple criterion phrased purely in terms of cube stabilisers that ensures that the group satisfies the strong Tits alternative, provided that each vertex stabiliser satisfies the strong Tits aternative. We use it to prove that all Artin groups of type FC satisfy the strong Tits alternative. 
\end{abstract}

\section{Introduction}

The Tits alternative and its many variants, originating in the work of Tits \cite{Titsalternative}, deals with a striking dichotomy at the level of subgroups of a given group. A group satisfies the \textit{Tits alternative} if every finitely generated subgroup either contains a non-abelian free subgroup or is virtually soluble, and satisfies the \textit{strong Tits alternative} if this dichotomy holds also infinitely generated subgroups. Tits showed that linear groups in any characteristic satisfy the Tits alternative while linear groups in characteristic zero satisfy the strong Tits alternative \cite{Titsalternative}. Since then, many groups of geometric interest, and in particular groups displaying non-positively curved features, have been shown to satisfy the Tits alternative, including mapping class groups of hyperbolic surfaces \cite{Iv, McC}, outer automorphism groups of free groups \cite{BFH, BFH2}, groups of birational transformations of compact complex K\"ahler surfaces \cite{CantatTits}. The aim of this short note is to give a simple criterion to prove the Tits alternative for groups acting on finite-dimensional CAT(0) cube complexes, and to discuss applications, in particular to a large family of Artin groups, a class of groups generalising braid groups. 

Groups acting on CAT(0) cube complexes have a rich structure, and many tools have been developed over the years  in connection with the Tits alternative. Let us mention in particular that Sageev--Wise proved the strong Tits alternative for groups acting properly on CAT(0) cube complexes with a bound on the order of finite subgroups \cite{SW}. Caprace--Sageev \cite[Thm F]{CS} found a non-abelian free subgroup for groups acting on finite-dimensional CAT(0) cube complexes $X$ without a global fixed-point in $X\cup \partial X$, where $\partial X$ is the visual boundary. Fern\'os \cite[Thm~1.1]{F} proved the analogous result for groups without a finite orbit in $X\cup \partial X$ for $\partial X$ the Roller boundary. For groups acting on CAT(0) cube complexes, it is natural to ask whether the strong Tits alternative for all vertex stabilisers implies the strong Tits alternative for the whole group. This is however not the case, as already noted in \cite{SW}, see Remark~\ref{rem:exa} below. In order to obtain such a combination result, it is necessary to impose additional conditions on the group and the action. Such conditions do exist, and they generally require that for a particular class of subgroups of $G$, increasing sequences of subgroups eventually stabilise. This condition is on the finite subgroups of $G$ in the case of proper actions \cite{SW}, and on the finitely generated virtually soluble subgroups of $G$ in the general case, as was probably known to experts (see Corollary \ref{cor:Tits_chain_subgroups}). However, such conditions presuppose an understanding of the global structure of $G$ by requiring a control of subgroups of $G$ in a given class. It seems to us preferable to have a criterion that does not involve the global structure of $G$ but instead focuses solely on cube stabilisers. The main advantage of our criterion is thus to be formulated purely in local terms, i.e.\ in terms of the cube stabilisers. 

All the actions we consider are by cellular isometries. $\mbox{Stab}(C)$ denotes the setwise stabiliser of a cube $C$.

\begin{theoremalph}\label{cor:Tits_combination_local}
Let $G$ be a group acting on a finite-dimensional $\mathrm{CAT}(0)$ cube complex, such that each vertex stabiliser satisfies the strong Tits alternative, and we have the following property.
	\begin{description}
\item[$(*)$]	
	for every pair of intersecting cubes $C, C'$, there exists a cube $D$ containing~$C$ such that  
		$\mbox{Stab}(C) \cap \mbox{Stab}(C') = \mbox{Stab}(D)$.
\end{description}
Then $G$ satisfies the strong Tits alternative.
\end{theoremalph}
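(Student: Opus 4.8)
The plan is to prove the strong Tits alternative for $G$ directly, using condition $(*)$ to keep every group that arises firmly inside the class of cube stabilisers. The starting point is that cube stabilisers inherit the strong Tits alternative from the vertex stabilisers: for a cube $C$ the pointwise stabiliser $\mathrm{Stab}^0(C)$ has finite index in $\mathrm{Stab}(C)$ (the quotient embeds in the finite group $\mathrm{Aut}(C)$) and lies in the stabiliser of any vertex of $C$, so, since the strong Tits alternative passes to subgroups and to finite extensions, $\mathrm{Stab}(C)$ satisfies it. In particular, every subgroup of a cube stabiliser with no non-abelian free subgroup is virtually soluble.

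The technical engine is that $(*)$ propagates from pairs to arbitrary families: if $\{C_i\}_{i\in I}$ is a family of cubes each meeting a fixed cube $C_0$, then $\bigcap_{i\in I}\mathrm{Stab}(C_i)=\mathrm{Stab}(D)$ for some cube $D\supseteq C_0$. Indeed, enumerating the $C_i$ one builds cubes $C_0\subseteq D_1\subseteq D_2\subseteq\cdots$ with $\mathrm{Stab}(D_n)=\bigcap_{i\le n}\mathrm{Stab}(C_i)$: at each step $D_n$ and $C_{n+1}$ meet, because both contain $C_0$, so $(*)$ applies to this pair and yields $D_{n+1}\supseteq D_n$; since $X$ is finite-dimensional this increasing sequence of cubes stabilises, and its stable value is the required $D$. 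A variant of this, grouping consecutive cubes along a combinatorial ray, also realises the intersection of the stabilisers of the cubes met by such a ray as a cube stabiliser.

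Now let $H\le G$ contain no non-abelian free subgroup. By Fern\'os's theorem \cite[Thm~1.1]{F}, $H$ has a finite orbit in the Roller compactification $\overline X=X\cup\partial X$, so after passing to a finite-index subgroup $H'$ we may assume $H'$ fixes a point $p\in\overline X$. If $p\in X$, then $H'$ lies in the stabiliser of the cube whose interior contains $p$, hence is virtually soluble by the first paragraph. If $p\in\partial X$, we analyse the stabiliser of $p$: the hyperplanes crossed by the combinatorial rays towards $p$ from a fixed vertex organise into at most $\dim X$ families, whose combinatorial Busemann functions define a homomorphism from a finite-index subgroup $H''\le H'$ onto a free abelian group of rank at most $\dim X$, with kernel $N$ on which all of them vanish. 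The decisive use of $(*)$ is that this $N$ can then be identified with the intersection of the stabilisers of a suitable family of cubes lying along the rays towards $p$; by the propagation property of the previous paragraph this intersection is a cube stabiliser, so $N$ is virtually soluble. Since $H''$ fits in a short exact sequence $1\to N\to H''\to A\to 1$ with $A$ abelian, and an extension of a virtually soluble group by a virtually soluble group is virtually soluble (replace $N$ by its soluble radical, which is characteristic in $N$, hence normal in $H''$, and of finite index in $N$), we conclude that $H''$, and therefore $H$, is virtually soluble — which is the strong Tits alternative.

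The main obstacle is the boundary case, and within it the identification of the residual group $N$ as a cube stabiliser rather than some pathological group — for instance a locally finite group that is not virtually soluble, which is the behaviour of the example underlying Remark~\ref{rem:exa}. It is condition $(*)$, through the family-intersection property, that excludes this, and it is the only hypothesis playing that role; correspondingly, understanding precisely how the vanishing of all Busemann functions forces $N$ into a cube stabiliser is where the real work of the proof lies.
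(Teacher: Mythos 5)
Your reduction to the boundary case is sound, and your observation that $(*)$ lets you realise the intersection of the stabilisers of a family of cubes all meeting a fixed cube $C_0$ as a single cube stabiliser is correct and is genuinely part of the intended mechanism. But the decisive step --- showing that the kernel $N$ of the Busemann homomorphism is virtually soluble --- is not carried out: you assert that $N$ ``can be identified with the intersection of the stabilisers of a suitable family of cubes lying along the rays towards $p$'' and then yourself concede that justifying this is ``where the real work of the proof lies''. As stated the claim is unjustified and not obviously true: each element of $N$ is elliptic, but the cubes it stabilises depend on the element, and there is no a priori reason for every element of $N$ to stabilise cubes drawn from one fixed family along the rays to $p$. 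In the example of Remark~\ref{rem:exa} the analogous kernel $\bigoplus_{n}A_5$ is locally elliptic yet contained in no cube stabiliser; only $(*)$ can rule this behaviour out, and you have not shown how it does. A secondary gap of the same kind: your ``variant, grouping consecutive cubes along a combinatorial ray'' is not proved and does not follow from the common-cube argument, since cubes far apart along a ray meet no common cube; extending $(*)$ to disjoint cubes requires an extra idea (the paper uses normal cube paths and their uniqueness, Lemma~\ref{lemma:removeC}).

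The paper closes the main gap by a softer route that you may want to adopt. One does not try to realise $N$ itself as a cube stabiliser. Instead, \cite[Thm B.1]{Cap} gives directly that $N$ is \emph{locally elliptic}, i.e.\ every finitely generated subgroup of $N$ fixes a point; and $(*)$ is used only to prove that the poset of non-empty fixed-point sets of subgroups of $G$ has height at most $\dim X+1$ (Proposition~\ref{prop:local}: the poset of cube stabilisers has height at most $\dim X+1$ and is closed under all intersections, and the fixed-point set of any elliptic subgroup $H$ coincides with that of the smallest cube stabiliser containing $H$). The fixed-point sets of the finitely generated subgroups of $N$ then form a downward-directed family in a finite-height poset, hence admit a smallest element $F$, so $\mathrm{Fix}(N)\supseteq F\neq\emptyset$; thus $N$ lies in a vertex stabiliser, is virtually soluble, and the extension argument finishes as you describe. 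If you wish to salvage your outline, the missing lemma you must prove is precisely this upgrade from local to global ellipticity for $N$.
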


As an application, we  prove the strong Tits alternative for a large class of Artin groups. Let us first recall their definition. Let $S$ be a finite set. To every pair of elements $s\neq t\in S$, we associate $m_{st}=m_{ts}\in \{2,3,\ldots, \infty\}$. 
The associated \emph{Artin group} $A_S$ is given by the following presentation:
$$ A_S \coloneqq \langle S \ | \ \underbrace{sts\cdots}_{m_{st}}=\underbrace{tst\cdots}_{m_{st}}\rangle,$$  and the associated \emph{Coxeter group} $W_S$ is obtained by adding the relation $s^2=1$ for every $s\in S$. For a subet $S'$ of $S$, the subgroup of $A_S$ generated by $S'$ is isomorphic to $A_{S'}$ \cite{vdL}, so we think of $A_{S'}$ as a subgroup of $A_S$, and call it a \emph{standard parabolic subgroup}. An Artin group is said to be \textit{spherical} if $W_S$ is finite, and is \textit{of type FC} if for every subset $S' \subseteq S$ such that $m_{st} < \infty$ for every $s, t \in S'$,  the subgroup $A_{S'}$ is spherical. 

Artin groups have been the topic of intense research in recent years, with a common theme being to show that they enjoy many of the features of non-positively curved groups \cite{HKS, HO1, HO2, MP, Haettel2}. Several classes of Artin groups have been shown to satisfy the strong Tits alternative, including spherical Artin groups \cite{CW}, many two-dimensional Artin groups \cite{OP,MP}, and  Artin groups that are virtually cocompactly cubulated \cite{HJP, Haettel}.  In this note, we prove the following: 

\begin{theoremalph}\label{thm:Tits_FC}
Artin groups of type FC satisfy the strong Tits alternative.
\end{theoremalph}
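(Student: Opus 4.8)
The plan is to deduce Theorem~\ref{thm:Tits_FC} from Theorem~\ref{cor:Tits_combination_local}. First I would treat the case in which $A_S$ is itself spherical separately: such a group satisfies the strong Tits alternative by \cite{CW}, so assume from now on that $A_S$ is not spherical. Then $A_S$ acts on the cube complex $\Int$ associated with its Deligne complex $\Phi$: the vertices of $\Int$ are the cosets of the spherical standard parabolic subgroups of $A_S$, its cubes correspond to the intervals of the inclusion order on these cosets (each such interval being a Boolean lattice, hence realised as a cube), and $A_S$ acts by left translation. By the theory of the Deligne complex --- and this is the one place where the type FC hypothesis is used --- $\Int$ is CAT(0); moreover it has dimension the maximal cardinality of a spherical subset of $S$, hence is finite-dimensional. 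The stabiliser of the vertex given by a coset $gA_T$ is the conjugate $gA_Tg^{-1}$, and since $A_S$ is not spherical the subset $T$ is proper, so $A_T$ is a spherical Artin group, which satisfies the strong Tits alternative by \cite{CW}; the same holds for all conjugates of such $A_T$. Hence the vertex-stabiliser hypothesis of Theorem~\ref{cor:Tits_combination_local} is satisfied, and it remains only to verify property~$(*)$.

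For this I would first note that, because the $A_S$-action preserves the inclusion order on cosets, an element stabilising a cube necessarily fixes the vertex corresponding to the smallest coset in the associated interval; consequently every cube stabiliser is again a conjugate $gA_Tg^{-1}$ of a spherical standard parabolic subgroup, and the face relation among cubes matches inclusion among the associated subgroups. Now let $C$ and $C'$ be two intersecting cubes, with stabilisers $gA_Tg^{-1}$ and $g'A_{T'}g'^{-1}$. A common vertex of $C$ and $C'$ is a coset $gA_{T''}=g'A_{T''}$ with $T\cup T'\subseteq T''$ and $T''$ spherical, so that $g^{-1}g'\in A_{T''}$ and
\[
  gA_Tg^{-1}\cap g'A_{T'}g'^{-1}=g\bigl(A_T\cap (g^{-1}g')A_{T'}(g^{-1}g')^{-1}\bigr)g^{-1},
\]
where the intersection in parentheses is taken entirely inside the spherical Artin group $A_{T''}$. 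One then shows that this intersection is a conjugate $bA_Ub^{-1}$ of a standard parabolic subgroup, with $b\in A_T$ and $U\subseteq T$; taking for $D$ the cube with smallest coset $(gb)A_U$ obtained from $C$ by extending it in the $|T\setminus U|$ directions indexed by $T\setminus U$, one obtains a cube containing $C$ with $\mbox{Stab}(D)=(gb)A_U(gb)^{-1}=\mbox{Stab}(C)\cap\mbox{Stab}(C')$. Thus $D$ witnesses $(*)$, and Theorem~\ref{cor:Tits_combination_local} applies and yields Theorem~\ref{thm:Tits_FC}.

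The step I expect to be the main obstacle is the identification of $A_T\cap (g^{-1}g')A_{T'}(g^{-1}g')^{-1}$ as a conjugate of a standard parabolic subgroup of $A_T$. What makes this tractable is precisely the assumption that $C$ and $C'$ intersect: it forces the conjugating element $g^{-1}g'$ into the spherical standard parabolic subgroup $A_{T''}$, localising the entire computation there, so that one is left with the intersection of two parabolic subgroups of a single spherical Artin group. This can be controlled using known structural properties of the lattice of parabolic subgroups of spherical Artin groups, in particular that they are closed under intersection; alternatively, one can argue geometrically inside the Deligne complex, whose relevant fixed-point sets are convex and are themselves Deligne complexes of standard parabolic subgroups. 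Granting this, and granting the structural input that $\Int$ is a finite-dimensional CAT(0) cube complex (which is where the type FC hypothesis enters), the verification of $(*)$, and hence the deduction of Theorem~\ref{thm:Tits_FC} from Theorem~\ref{cor:Tits_combination_local}, is routine.
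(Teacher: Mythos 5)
Your proposal is correct and follows essentially the same route as the paper: act on the Deligne cube complex of Charney--Davis, invoke Cohen--Wales linearity for the spherical vertex stabilisers, and verify property~$(*)$ by localising the intersection of two cube stabilisers inside a single spherical standard parabolic subgroup, where the key input is that intersections of parabolic subgroups of spherical Artin groups are again parabolic (the paper cites Cumplido--Gebhardt--Gonz\'alez-Meneses--Wiest for exactly this step, which you correctly flag as the main obstacle).
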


In order to emphasise the wider applicability of our criterion, we also mention a new proof of a result of Antol\'{\i}n--Minasyan \cite{AM}, stating that the strong Tits alternative is stable under graph products. \\

\paragraph{Aknowledgements} We thank Pierre-Emmanuel Caprace, Talia Fern\'os, and Anthony Genevois for explaining to us the state of affairs in the subject. This work was conducted during  \emph{Nonpositive curvature} conference at Banach Center and \emph{LG\&TBQ} conference at the University of Michigan.

\section{First combination result}

In this section, we prove a first combination result for the strong Tits alternative, under a `global' condition on the action.

\begin{prop}\label{prop:Tits_combination}
	Let $G$ be a group acting on a finite-dimensional $\mathrm{CAT}(0)$ cube complex,  such that each vertex stabiliser satisfies the strong Tits alternative. Suppose that the poset of fixed-point sets of finitely generated virtually soluble subgroups of $G$ satisfies the \emph{descending chain condition}, i.e.\ every decreasing sequence
	$$ F_1 \supseteq F_2 \supseteq \cdots $$
	of fixed-point sets of finitely generated virtually soluble subgroups of $G$ satisfies $F_i = F_{i+1}$ for $i$ large enough.
	Then $G$ satisfies the strong Tits alternative.
\end{prop}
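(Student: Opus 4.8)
The plan is to prove that every subgroup $H\le G$ with no non-abelian free subgroup is virtually soluble. I would first record that the strong Tits alternative is stable under taking subgroups and under extensions: for $1\to N\to K\to Q\to 1$ with $N$ and $Q$ satisfying it, one analyses $L\le K$ through $L\cap N$ and $L/(L\cap N)$, using that virtual solubility is extension-closed (a virtually soluble group has a characteristic finite-index soluble subgroup, its soluble radical) and that a group surjecting onto a non-abelian free group contains one. Two consequences: $\mathrm{Stab}(C)$ satisfies the strong Tits alternative for every cube $C$, being a finite extension of the subgroup fixing $C$ pointwise, hence of a subgroup of a vertex stabiliser; and, by \cite[Thm~F]{CS}, any subgroup of $G$ with no non-abelian free subgroup fixes a point of $X\cup\partial X$, where $\partial X$ is the visual boundary.

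Next I would pass to finitely generated subgroups, showing that every finitely generated $Q\le H$ is virtually soluble: by the above $Q$ fixes a point of $X\cup\partial X$; if this point lies in $X$ then $Q\le\mathrm{Stab}(C)$ for the smallest cube $C$ containing it and we are done, and if it lies on $\partial X$ only one argues as in the boundary case below. Hence the finitely generated subgroups of $H$ are finitely generated virtually soluble subgroups of $G$, so the descending chain condition applies to their fixed-point sets. Choosing $Q_0\le H$ finitely generated with $\mathrm{Fix}(Q_0)$ minimal, for each $h\in H$ the finitely generated subgroup $\langle Q_0,h\rangle$ satisfies $\emptyset\neq\mathrm{Fix}(\langle Q_0,h\rangle)\subseteq\mathrm{Fix}(Q_0)$, hence $\mathrm{Fix}(\langle Q_0,h\rangle)=\mathrm{Fix}(Q_0)$ by minimality; so $h$, and therefore all of $H$, fixes $\mathrm{Fix}(Q_0)$ pointwise, giving $\mathrm{Fix}(H)\supseteq\mathrm{Fix}(Q_0)\neq\emptyset$.

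Finally I would exploit this global fixed point. If $H$ fixes a point of $X$ then $H\le\mathrm{Stab}(C)$ for the smallest cube containing it, and we are done by the stability properties above. The remaining, and main, case is that $H$ fixes some $\xi\in\partial X$ but no point of $X$. Here I would use the Busemann character $c\colon H\to\mathbb{R}$ attached to $\xi$: if $c\neq 0$ then $H/\ker c$ embeds into $(\mathbb{R},+)$, so by extension-closure it suffices to treat $\ker c$, which again fixes $\xi$, now with $c\equiv 0$; thus we may assume $H$ preserves every horoball centred at $\xi$. In this parabolic situation I would extract from the combinatorial geometry of $X$ near $\xi$ an $H$-invariant object of strictly smaller complexity --- a combinatorial horoball or hyperplane carrier, or a factor of a product decomposition supplied by Caprace--Sageev rank-rigidity --- on which the induced action satisfies the same hypotheses, and iterate; finite-dimensionality caps the complexity and the descending chain condition forces termination, at which point $H$ must stabilise a cube. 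This parabolic case --- controlling the stabiliser of a visual boundary point with vanishing Busemann character --- is the principal obstacle; the rest is a routine assembly of \cite[Thm~F]{CS} with the two stability properties recorded above.
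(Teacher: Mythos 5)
There is a genuine gap, in two places. First, your middle paragraph asserts $\emptyset\neq\mathrm{Fix}(\langle Q_0,h\rangle)$ for every $h\in H$, but nothing you have established guarantees that a finitely generated (virtually soluble) subgroup of $H$ fixes a point of $X$: a single hyperbolic isometry generates an infinite cyclic, hence soluble, subgroup with empty fixed-point set. The descending chain condition is a statement about fixed-point sets in $X$, and it only has content when applied to a subgroup all of whose finitely generated subgroups are elliptic; applied to $H$ itself, the minimal fixed-point set may simply be empty and the argument yields nothing. Second, and more seriously, the ``parabolic case'' that you defer to the last paragraph is not a routine residue but the entire difficulty of the proposition, and the sketch you give (pass to the kernel of the Busemann character, find an invariant object of ``smaller complexity'' via horoballs or rank rigidity, iterate) is not a proof: the kernel of the Busemann character is still an arbitrary group fixing a visual boundary point, and there is no identified induction parameter or termination argument. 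The visual boundary is the wrong tool here.

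The paper's route avoids both problems by replacing \cite[Thm~F]{CS} with Fern\'os' theorem on the Roller boundary \cite[Thm~1.1]{F} together with Caprace's structure theorem \cite[Thm~B.1]{Cap}: if $H$ has no non-abelian free subgroup, it virtually fixes a point of the Roller boundary, and then a finite-index subgroup $H'\le H$ fits into $1\to N\to H'\to Q\to 1$ with $Q$ finitely generated virtually abelian of rank at most $\dim X$ and $N$ \emph{locally elliptic}, i.e.\ every finitely generated subgroup of $N$ fixes a point of $X$. This is exactly the hypothesis your minimality argument needs: each finitely generated subgroup of $N$ is elliptic, hence lies in a vertex stabiliser, hence is virtually soluble, and its fixed-point set is non-empty; the descending chain condition then produces a minimal non-empty $F$ with $\mathrm{Fix}(N)\supseteq F$, so $N$ is elliptic and virtually soluble, and extension-closure of virtual solubility finishes the proof. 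Your first paragraph (stability under subgroups and extensions, cube stabilisers inheriting the alternative) is correct and is used in the same way, but the core of the argument requires the Roller-boundary structure theorem, which you would otherwise have to reprove.
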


Note that we have the following immediate corollary phrased purely in terms of subgroups of $G$, which was probably folklore and  known to experts, although it does not seem to appear in the literature. 

\begin{cor}\label{cor:Tits_chain_subgroups}
		Let $G$ be a group acting on a finite-dimensional $\mathrm{CAT}(0)$ cube complex,  such that each vertex stabiliser satisfies the strong Tits alternative. Suppose that the poset of finitely generated virtually soluble groups of $G$ satisfies the \emph{ascending chain condition}, i.e.\ for every increasing sequence
		$$ H_1 \subseteq H_2 \subseteq\cdots $$
		of finitely generated virtually soluble subgroups of $G$, the inclusion $H_i\subseteq H_{i+1}$ is an isomorphism for $i$ large enough.
	Then $G$ satisfies the strong Tits alternative.
\end{cor}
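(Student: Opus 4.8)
The plan is to apply Proposition~\ref{prop:Tits_combination} not to $G$ itself but, separately, to each subgroup $H\le G$ that contains no non-abelian free subgroup: since the strong Tits alternative is inherited by subgroups, it suffices to prove that every such $H$ is virtually soluble. So fix such an $H$ and consider the restricted action of $H$ on the cube complex $X$; its vertex stabilisers are subgroups of vertex stabilisers of $G$, hence also satisfy the strong Tits alternative. Thus by Proposition~\ref{prop:Tits_combination} it is enough to verify that the poset of fixed-point sets of finitely generated virtually soluble subgroups of $H$ satisfies the descending chain condition; granting this, $H$ satisfies the strong Tits alternative, and having no non-abelian free subgroup it is virtually soluble, as required.

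To verify the descending chain condition for $H$, I would start from a decreasing chain $F_1\supseteq F_2\supseteq\cdots$ with $F_i$ the fixed-point set $\mathrm{Fix}(H_i)$ of a finitely generated virtually soluble $H_i\le H$; if some $F_i$ is empty the chain is eventually constant, so we may assume each $F_i\neq\emptyset$. The natural move is to set $K_i:=\langle H_1,\dots,H_i\rangle$, so that $K_i$ is finitely generated and $\mathrm{Fix}(K_i)=F_1\cap\cdots\cap F_i=F_i\neq\emptyset$, and then to appeal to the ascending chain condition. The one step that requires an argument — and the only place the hypothesis on $H$ is used — is that each $K_i$ is virtually soluble: since $K_i$ fixes a point of $X$, it stabilises the (unique) cube whose interior contains that point, the action on this finite-dimensional cube factors through a finite group, and so a finite-index subgroup of $K_i$ fixes a vertex $v$; that subgroup lies in $\mathrm{Stab}(v)$, which satisfies the strong Tits alternative, and as it contains no non-abelian free subgroup it is virtually soluble, whence $K_i$ is virtually soluble too (a finite extension of a virtually soluble group being virtually soluble).

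With this in hand the chain $K_1\subseteq K_2\subseteq\cdots$ of finitely generated virtually soluble subgroups of $G$ stabilises by hypothesis, hence so does $F_i=\mathrm{Fix}(K_i)$, giving the descending chain condition. I do not anticipate a genuine obstacle: the argument is essentially formal, the only mildly delicate point being the reduction to subgroups $H$ without non-abelian free subgroups, which is precisely what forces the groups $\langle H_1,\dots,H_i\rangle$ — not virtually soluble for an arbitrary such chain in $G$ — to be virtually soluble in the situation at hand.
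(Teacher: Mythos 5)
Your proof is correct, and it takes the route the paper intends: the paper states this corollary without proof, calling it an immediate consequence of Proposition~\ref{prop:Tits_combination}, and your argument is exactly the deduction being left implicit (pass from a descending chain of fixed-point sets to the ascending chain of joins $K_i=\langle H_1,\dots,H_i\rangle$ and invoke the ascending chain condition). You also correctly spot and repair the one point at which the corollary is not a purely formal consequence of the proposition's statement, namely that a join of virtually soluble subgroups need not be virtually soluble: restricting to subgroups $H$ without non-abelian free subgroups, and using that a point-stabilising subgroup virtually lies in a vertex stabiliser, is precisely the right fix, and mirrors how the descending chain condition is actually used inside the proof of Proposition~\ref{prop:Tits_combination}.
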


\begin{rem}
\label{rem:exa}
	Note that Proposition \ref{prop:Tits_combination} and Corollary \ref{cor:Tits_chain_subgroups} do not hold if we only assume that all the vertex stabilisers satisfy the strong Tits alternative. Indeed, consider the case of the wreath product 
	$$G\coloneqq A_5 \wr \mathbb{Z} = \Big(  \bigoplus_{n \in \mathbb{Z}} A_5\Big) \rtimes \mathbb{Z},$$
	where $A_5$ denotes the alternating group on $5$ elements and $\mathbb{Z}$ acts on $ \bigoplus_{n \in \mathbb{Z}} A_5$ by shifting the indices. It is known that $G$ acts properly on a two-dimensional $\mathrm{CAT}(0)$ cube complex  \cite[Prop 9.33]{Gen}, and in particular vertex stabilisers satisfy the strong Tits alternative. However, $G$ itself does not satisfy the Tits alternative. Indeed, $G$ does not contain non-abelian free subgroups, and $G$ is not virtually soluble since finite index subgroups of $G$ contain a copy of the non-soluble group~$A_5$.   
\end{rem}
	
	\begin{proof}[Proof of Proposition \ref{prop:Tits_combination}]
		Let $H$ be a subgroup of $G$. Since $X$ is finite-dimensional, by \cite[Thm~1.1]{F} we have that $H$ contains a non-abelian free subgroup or virtually fixes a point in the Roller boundary of $X$. In the latter case, by \cite[Thm~B.1]{Cap} we have that $H$ admits a finite index subgroup~$H'$ that fits into a short exact sequence 
		$$ 1 \rightarrow N \rightarrow H' \rightarrow Q\rightarrow 1,$$
		where $Q$ is a finitely generated virtually abelian group of rank at most $\dim(X) $, and $N$ is a \emph{locally elliptic} subgroup of $G$, i.e.\ every finitely generated subgroup of~$N$ fixes a point of $X$.
		If $N$ contains a non-abelian free subgroup, then we are done. If $N$ does not contain a non-abelian free subgroup, consider the poset~$\mathcal{F}$ of fixed-point sets of finitely generated subgroups of~$G$ contained in~$N$. 
Since each finitely generated subgroup of~$N$ fixes a point of $X$, and hence satisfies the strong Tits alternative, it is virtually soluble. By the descending chain condition, there exists a smallest element $F\in \mathcal{F}$. Consequently for each element $g\in N$ we have $\mbox{Fix}(\langle g\rangle)\supseteq F$, and so $\mbox{Fix}(N) \supseteq F$.
Thus $\mbox{Fix}(N)$ is non-empty and so $N$ satisfies the strong Tits alternative. Since $N$ does not contain a non-abelian free subgroup, it is virtually soluble. Since the class of virtually soluble groups is stable under extensions, it now follows that $H'$, and hence $H$, is virtually soluble. 
	\end{proof}

\begin{rem} It follows from \cite[Thm~1.1]{F} and \cite[Thm~B.1]{Cap} that one can replace the first `strong Tits alternative' by `Tits alternative' in Corollary~\ref{cor:Tits_chain_subgroups}. However, we cannot do the same in Proposition~\ref{prop:Tits_combination}, since $N$ might not be finitely generated even if $H$ is finitely generated.

Similarly, as observed by Pierre-Emmanuel Caprace, since the family of amenable groups (resp.\ elementarily amenable groups) is closed under direct limits, if $G$ acts on a finite-dimensional $\mathrm{CAT}(0)$ cube complex, such that each finitely generated subgroup of each vertex stabiliser contains a non-abelian free subgroup or is amenable (resp.\ elementarily amenable), then each subgroup of $G$ contains a non-abelian free subgroup or is amenable (resp.\ elementarily amenable).
\end{rem}

\section{Local condition}

The descending chain condition for fixed-point sets appearing in Proposition \ref{prop:Tits_combination} is global in nature, as it requires to understand the virtually soluble subgroups of the groups under study (and their fixed-point sets). In this section we prove Theorem~\ref{cor:Tits_combination_local}, which involves the more tractable local condition $(*)$ that implies the descending chain condition. 

A poset $(\mathcal{F}, \leq)$ has \textit{height at most $n$} if every chain
$$ F_1 < F_2 < \cdots $$
of elements of $\mathcal{F}$ has length at most $n$. 

\begin{prop}\label{prop:local}
Let $G$ be a group acting 
on an $n$-dimensional $\mathrm{CAT}(0)$ cube complex $X$ satisfying property $(*)$ of Theorem~\ref{cor:Tits_combination_local}. Then the poset $\mathcal F$ of non-empty fixed-point sets of subgroups of $G$ has height at most $n+1$.
\end{prop}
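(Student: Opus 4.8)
The plan is to show that any fixed-point set of a subgroup $H\leq G$ can be realised as a fixed-point set $\mathrm{Fix}(H)=\bigcap_{i} \mathrm{Fix}(g_i)$ of countably many elements, and then to reduce, using property $(*)$, the understanding of intersections of fixed-point sets to the combinatorics of cubes, where chains of length $n+2$ are impossible by dimension. Concretely, I would first recall the standard fact that in a $\mathrm{CAT}(0)$ cube complex a fixed-point set of an isometry is itself a (nonempty or empty) convex subcomplex after passing to the first cubical subdivision, or more precisely that $\mathrm{Fix}(g)$, when nonempty, is a subcomplex whose carrier has a well-defined minimal cube structure; the key object to track is, for a fixed-point set $F$, the smallest cube $C_F$ such that $F$ is contained in the closed star of $C_F$ and $F$ meets the interior of $C_F$. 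I would want to argue that $F$ is determined by a "combinatorial type" consisting of such a cube $C_F$ together with the trace of $F$ on the link, and that property $(*)$ forces the passage from $F=\mathrm{Fix}(H)$ to $F'=\mathrm{Fix}(H')\subsetneq F$ for $H'\geq H$ to strictly increase the dimension of the associated cube.

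The main steps, in order: (1) Fix a strictly descending chain $F_1\supsetneq F_2\supsetneq\cdots$ of nonempty fixed-point sets in $\mathcal F$, realised by an ascending chain of subgroups $H_1\leq H_2\leq\cdots$ with $F_i=\mathrm{Fix}(H_i)$; it suffices to bound the length of such a chain by $n+1$. (2) To each nonempty fixed-point set $F$ attach a cube $D(F)$: since $F$ is a nonempty convex subset, it contains a point, and among all cubes $C$ whose open cube meets $F$ choose one; using convexity of $F$ and the fact that $F$ is $H$-invariant, show one can choose $D(F)$ canonically (e.g.\ as a cube of minimal dimension whose open cube meets $F$, after checking that all such minimal cubes whose open cubes meet $F$ can be amalgamated — this is where one uses that the intersection of the stabilisers behaves well). (3) Show $D(F_i)\subseteq D(F_{i+1})$ is a chain of cubes, hence $\dim D(F_i)$ is nondecreasing and bounded by $n$. (4) Show that if $F_{i+1}\subsetneq F_i$ then $\dim D(F_{i+1})>\dim D(F_i)$: here, given $g\in H_{i+1}$ with $F_{i+1}=F_i\cap\mathrm{Fix}(g)\subsetneq F_i$, one has two intersecting cubes (a cube carrying part of $F_i$ and one carrying part of $\mathrm{Fix}(g)$), and property $(*)$ produces a strictly larger cube $D$ with $\mathrm{Stab}(D)$ equal to the relevant intersection of stabilisers, forcing the combinatorial type to jump. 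This yields a chain of at most $n+1$ strict inclusions (dimensions ranging over $0,1,\dots,n$), so the height is at most $n+1$.

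The delicate point — and the step I expect to be the main obstacle — is Step (2)–(4): making precise the assignment $F\mapsto D(F)$ so that it is genuinely monotone and so that a strict drop in $F$ forces a strict increase in $\dim D(F)$. The naive "smallest cube meeting $F$" need not be unique, and $F$ can meet the interiors of several incomparable cubes; one must instead work with the subcomplex structure of $F$ (its carrier), possibly after barycentric subdivision, and argue that property $(*)$ — which says stabiliser-intersections are again cube stabilisers, with the ambient cube only growing — is exactly what prevents the pathology of a descending chain of fixed-point sets all "living in the same cube". In other words, the heart of the argument is translating "$\mathrm{Fix}(H)\cap\mathrm{Fix}(g)$ is strictly smaller but has the same supporting cube" into a contradiction with $(*)$, by observing that $\mathrm{Stab}(D)$ for the common cube $D$ would have to fix strictly more than it does. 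Once this local-to-combinatorial translation is set up correctly, the dimension bound is automatic.
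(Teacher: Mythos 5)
There is a genuine gap: steps (2)--(4) of your plan, which you yourself flag as ``the main obstacle'', are precisely the content of the proposition, and the assignment $F\mapsto D(F)$ you propose is not constructed, nor is it clear that any canonical ``supporting cube'' of $\mathrm{Fix}(H)$ exists with the monotonicity and strict-dimension-jump properties you need. Two concrete problems. First, a fixed-point set is spread over many pairwise disjoint cubes, so to relate $\mathrm{Fix}(H)$ to a single cube stabiliser you must intersect stabilisers of cubes that do \emph{not} intersect; property $(*)$ as stated only covers intersecting cubes, and extending it to disjoint cubes (the paper does this via normal cube paths, Lemma~\ref{lemma:removeC}) is an essential ingredient your plan never addresses. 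Second, in step (4) the cube $D$ produced by $(*)$ satisfies $D\supseteq C$ but may equal $C$, so $(*)$ alone does not force the dimension of your $D(F)$ to jump when $F$ strictly drops; the strictness has to come from somewhere else, and your sketch does not supply it.

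The paper's route avoids a geometric ``supporting cube'' of $F$ altogether: it first shows (using $(*)$ extended to disjoint cubes) that the poset $\mathcal P$ of cube stabilisers has height at most $n+1$ and is closed under arbitrary intersections, so every point-fixing subgroup $H$ has a well-defined minimal cube stabiliser $P_H\in\mathcal P$ containing it; it then proves the key identity $\mathrm{Fix}(H)=\mathrm{Fix}(P_H)$. A strictly descending chain of $n+2$ fixed-point sets gives an ascending chain $P_{H_1}\subseteq\cdots\subseteq P_{H_{n+2}}$ in $\mathcal P$, which must stall by the height bound, contradicting $\mathrm{Fix}(H_k)\neq\mathrm{Fix}(H_{k+1})$. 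The dimension bound enters only through the height of $\mathcal P$ (realised by a nested chain of cubes with strictly growing stabilisers), not through a cube canonically attached to each fixed-point set. If you want to rescue your plan, the object to track is $P_{H}$, not a cube carrying $F$, and the identity $\mathrm{Fix}(H)=\mathrm{Fix}(P_H)$ is the translation step you were looking for.
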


In particular, Theorem~\ref{cor:Tits_combination_local} is a direct consequence of Propositions~\ref{prop:Tits_combination} and~\ref{prop:local}. We prove Proposition~\ref{prop:local} in several steps. 
The first one is the following `local to global' result:

\begin{lem}
\label{lemma:removeC}
Let $G$ be a group acting 
on a $\mathrm{CAT}(0)$ cube complex~$X$ satisfying property $(*)$.
Then property $(*)$ holds also for disjoint cubes $C,C'$.
\end{lem}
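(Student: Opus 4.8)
The plan is to reduce the case of disjoint cubes $C, C'$ to the intersecting case by interpolating along a geodesic-like sequence of cubes. First I would invoke the standard fact that in a $\mathrm{CAT}(0)$ cube complex, any two cubes $C, C'$ can be joined by a \emph{gallery}, i.e.\ a sequence of cubes $C = C_0, C_1, \ldots, C_k = C'$ in which consecutive cubes $C_{i-1}, C_i$ intersect (one can for instance take a combinatorial geodesic in the $1$-skeleton from a vertex of $C$ to a vertex of $C'$ and thicken it appropriately, or pass through the cube spanned by $C$ and a nearest vertex of $C'$). Actually, the cleanest approach is induction on the combinatorial distance between $C$ and $C'$: if they already intersect there is nothing to prove, so suppose $d(C,C') = \ell \geq 1$ and pick a vertex $v \in C'$ realising the distance to $C$; let $C''$ be the first cube on a combinatorial geodesic from $v$ toward $C$ that is still `attached' to $C'$, so that $C''$ intersects $C'$ while $d(C, C'') < \ell$.

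Next I would apply property $(*)$ to the intersecting pair $C', C''$: there is a cube $D'$ containing $C'$ with $\mathrm{Stab}(C') \cap \mathrm{Stab}(C'') = \mathrm{Stab}(D')$. The key observation is then that $\mathrm{Stab}(C) \cap \mathrm{Stab}(C') = \mathrm{Stab}(C) \cap \mathrm{Stab}(D')$: the inclusion $\supseteq$ is clear since $D' \supseteq C'$; for $\subseteq$, one needs that any element fixing both $C$ and $C'$ also fixes $C''$ — and this is where the geometry enters, since $C''$ should be canonically determined by $C$ and $C'$ (it lies on the `bridge' between them). If instead one arranges $C''$ to be determined by $C'$ alone together with the direction toward $C$, one uses that an element stabilising $C$ and $C'$ stabilises the bridge between them. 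Having established this equality, I apply the inductive hypothesis to the pair $C, D'$, which has smaller distance (or intersects), to obtain a cube $D \supseteq C$ with $\mathrm{Stab}(C) \cap \mathrm{Stab}(D') = \mathrm{Stab}(D)$, and then $\mathrm{Stab}(C) \cap \mathrm{Stab}(C') = \mathrm{Stab}(D)$ as desired.

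The main obstacle I anticipate is making precise the claim that an element $g$ stabilising both $C$ and $C'$ must stabilise the interpolating cube $C''$, since a priori $g$ could permute the cubes on a geodesic between $C$ and $C'$. The resolution should use the \emph{uniqueness} of the bridge (the cubical analogue of the CAT(0) bridge between two convex sets): the carrier of the geodesic between $C$ and $C'$, or equivalently the intersection of halfspaces separating them appropriately, is canonically associated to the pair $\{C, C'\}$ and hence is preserved by $g$; from this canonical combinatorial object one extracts a well-defined cube adjacent to $C'$ (or the gate of $C$ in the star of $C'$) that is necessarily $g$-invariant. I would also need to check the base-case bookkeeping — that $D' \supseteq C'$ does not disturb disjointness from $C$ in a problematic way — but this is routine once the interpolation step is set up, since the inductive hypothesis is precisely property $(*)$ for (possibly disjoint) pairs at smaller distance.
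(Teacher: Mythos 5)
Your overall strategy coincides with the paper's: interpolate between $C$ and $C'$ by a sequence of cubes in which consecutive members intersect, and combine property $(*)$ for intersecting pairs with an induction. But as written the argument has two genuine gaps. First, the point you yourself flag as the main obstacle --- that the interpolating cube $C''$ must be preserved by every element of $\mathrm{Stab}(C)\cap\mathrm{Stab}(C')$ --- is exactly the crux, and you do not actually resolve it. A cube chosen as ``the first cube on a combinatorial geodesic from a closest vertex $v\in C'$ toward $C$'' is not canonical: neither $v$ nor the geodesic is unique in general, and an element stabilising both $C$ and $C'$ may permute these choices and hence move $C''$. Gesturing at ``the bridge'' or ``the gate of $C$ in the star of $C'$'' does not close this (stars are not convex, so gates need not exist there); one must exhibit a concrete, equivariantly defined interpolating cube. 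The paper does this by taking the \emph{normal cube path} $C=C_0,C_1,\dots,C_k=C'$ of Niblo--Reeves \cite{NR}: consecutive cubes on it intersect, and its \emph{uniqueness} immediately implies that $\mathrm{Stab}(C)\cap\mathrm{Stab}(C')$ stabilises every $C_i$, which is the identity you need.

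Second, your induction does not terminate as set up. You apply $(*)$ to the intersecting pair $(C',C'')$ to obtain $D'\supseteq C'$ with $\mathrm{Stab}(D')=\mathrm{Stab}(C')\cap\mathrm{Stab}(C'')$, and then invoke the inductive hypothesis for the pair $(C,D')$, which you assert ``has smaller distance''. But $D'$ contains $C'$, so all one gets is $d(C,D')\le d(C,C')$, with equality entirely possible --- indeed $(*)$ may return $D'=C'$ itself (e.g.\ when $\mathrm{Stab}(C')\subseteq\mathrm{Stab}(C'')$), in which case you are invoking the inductive hypothesis for the very pair you started with. The repair is to apply $(*)$ with the roles reversed, producing $D''\supseteq C''$ with $\mathrm{Stab}(D'')=\mathrm{Stab}(C')\cap\mathrm{Stab}(C'')$; then $d(C,D'')\le d(C,C'')<d(C,C')$ and the induction genuinely descends. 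The paper sidesteps this bookkeeping by fixing the normal cube path once and running a downward induction along it from the $C'$ end, so that $(*)$ is only ever applied to the intersecting pair $(C_m,D_{m+1})$ with $D_{m+1}\supseteq C_{m+1}$. So: right skeleton, but the two steps you label as ``the resolution should use\dots'' and ``routine bookkeeping'' are precisely where the proof still has to be done.
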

\begin{proof}
Let $C_0=C, C_1,\ldots, C_k=C'$ be the unique normal cube path from $C_0$ to $C_k$ (see \cite[\S3]{NR}). We prove by downward induction on $l=k-1,\ldots,0$ the claim that $\mbox{Stab}(C_l) \cap \mbox{Stab}(C_k) = \mbox{Stab}(D_l)$ for some $D_l\supseteq C_l$. For $l=k-1$ this is property~$(*)$. Now let $m<k-1$ and assume that we have proved the claim for $l=m+1$. We have $\mbox{Stab}(C_m) \cap \mbox{Stab}(C_k)=\mbox{Stab}(C_m) \cap \mbox{Stab}(C_{m+1})\cap \mbox{Stab}(C_k)$ by the uniqueness of normal paths. By the induction hypothesis we have $\mbox{Stab}(C_m) \cap \mbox{Stab}(C_{m+1})\cap \mbox{Stab}(C_k)=\mbox{Stab}(C_m)\cap \mbox{Stab}(D_{m+1})$ for some $D_{m+1}\supseteq C_{m+1}$. Thus by property~$(*)$ we have $D_m\supseteq C_m$ with $\mbox{Stab}(D_m)=\mbox{Stab}(C_m)\cap \mbox{Stab}(D_{m+1})$, proving the claim for $l=m$.
\end{proof}

\begin{cor}\label{lem:intersection}
Let $G$ be a group acting 
on an $n$-dimensional $\mathrm{CAT}(0)$ cube complex~$X$ satisfying property $(*)$.
Then the poset $\mathcal{P}$ of all cube stabilisers has height at most $n+1$.
\end{cor}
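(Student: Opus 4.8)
The plan is to show that any strictly increasing chain of cube stabilisers, $\mathrm{Stab}(C_1) \subsetneq \mathrm{Stab}(C_2) \subsetneq \cdots$, has length at most $n+1$. The starting observation is that by Lemma~\ref{lemma:removeC}, property~$(*)$ holds for \emph{all} pairs of cubes, disjoint or intersecting: given any two cubes $C, C'$ there is a cube $D \supseteq C$ with $\mathrm{Stab}(C) \cap \mathrm{Stab}(C') = \mathrm{Stab}(D)$. So the family of cube stabilisers is closed under pairwise intersection. The key consequence I want to extract from this is that each element of the poset $\mathcal{P}$ is the stabiliser of a \emph{canonical} cube, obtained by intersecting. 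Concretely, given a subgroup of the form $\mathrm{Stab}(C)$, I would consider the (nonempty, since it contains $C$) fixed-point set $\mathrm{Fix}(\mathrm{Stab}(C))$, which is a CAT(0)-convex subcomplex, and let $\widehat C$ be the smallest cube containing $C$ on which $\mathrm{Stab}(C)$ still acts, i.e.\ fixing $\widehat C$ setwise; using $(*)$ repeatedly one enlarges $C$ to such a maximal cube $\widehat C$ with $\mathrm{Stab}(\widehat C) = \mathrm{Stab}(C)$, and this $\widehat C$ is canonically determined by the subgroup $\mathrm{Stab}(C)$.

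The main point is then a dimension count. If $\mathrm{Stab}(C) \subsetneq \mathrm{Stab}(C')$, I claim $\dim \widehat C > \dim \widehat{C'}$, which immediately bounds the height of $\mathcal{P}$ by $n+1$ since cube dimensions lie in $\{0, 1, \ldots, n\}$. To see the claim: since $\mathrm{Stab}(C') \subseteq \mathrm{Stab}(C) = \mathrm{Stab}(\widehat C)$, the group $\mathrm{Stab}(C')$ fixes $\widehat C$ setwise; applying $(*)$ to the pair $(\widehat{C'}, \widehat C)$ produces a cube $D \supseteq \widehat{C'}$ with $\mathrm{Stab}(D) = \mathrm{Stab}(\widehat{C'}) \cap \mathrm{Stab}(\widehat C) = \mathrm{Stab}(C') \cap \mathrm{Stab}(C) = \mathrm{Stab}(C')= \mathrm{Stab}(\widehat{C'})$, and by the maximality in the construction of $\widehat{C'}$ this forces $D = \widehat{C'}$, so $\widehat{C'} \supseteq \widehat C$. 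Thus $\dim \widehat{C'} \geq \dim \widehat C$; and the inequality is strict, since $\widehat{C'} = \widehat C$ would give $\mathrm{Stab}(C') = \mathrm{Stab}(\widehat{C'}) = \mathrm{Stab}(\widehat C) = \mathrm{Stab}(C)$, contradicting the strict inclusion. Hence $\dim \widehat C > \dim \widehat{C'}$, as wanted.

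The step I expect to be the main obstacle is making the construction of the canonical cube $\widehat C$ rigorous: one must argue that the process of enlarging $C$ using $(*)$ (each time replacing $C$ by a strictly larger cube $D$ with the same stabiliser) terminates — which is clear from finite-dimensionality — and, more delicately, that the resulting maximal cube is \emph{well-defined}, independent of the order of enlargements, depending only on the subgroup $\mathrm{Stab}(C)$. The cleanest way is probably to characterise $\widehat C$ intrinsically, e.g.\ as the unique maximal cube $D \supseteq C$ with $\mathrm{Stab}(D) \supseteq \mathrm{Stab}(C)$ (equivalently $=\mathrm{Stab}(C)$): if $D_1, D_2$ are two such maximal cubes, then $(*)$ applied to $(D_1, D_2)$ yields $D \supseteq D_1$ with $\mathrm{Stab}(D) = \mathrm{Stab}(D_1) \cap \mathrm{Stab}(D_2) = \mathrm{Stab}(C)$, so by maximality $D = D_1 = D_2$. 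Once uniqueness is in hand, the monotonicity argument above goes through cleanly and gives the height bound $n+1$.
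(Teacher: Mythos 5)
There is a genuine gap, and it occurs at the two steps you yourself flagged as delicate. Your plan is to attach to each stabiliser $P=\mathrm{Stab}(C)$ a canonical maximal cube $\widehat C\supseteq C$ with $\mathrm{Stab}(\widehat C)=P$, and then to show that an inclusion of stabilisers forces a reverse inclusion (hence a drop in dimension) of the canonical cubes. Both halves fail for the same reason: property $(*)$, and its extension in Lemma~\ref{lemma:removeC}, produce a cube $D$ containing only the \emph{first} cube of the pair, and say nothing about $D$ containing the second. In your uniqueness argument, from $D\supseteq D_1$ and maximality of $D_1$ you get $D=D_1$, but nothing forces $D\supseteq D_2$, so the conclusion $D_1=D_2$ does not follow. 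In the monotonicity argument, $(*)$ applied to $(\widehat{C'},\widehat C)$ gives $D\supseteq\widehat{C'}$ with the right stabiliser, whence $D=\widehat{C'}$; but the inference ``so $\widehat{C'}\supseteq\widehat C$'' is a non sequitur, since $D$ was never shown to contain $\widehat C$. (There is also a sign slip: you state the claim for $\mathrm{Stab}(C)\subsetneq\mathrm{Stab}(C')$ but argue assuming $\mathrm{Stab}(C')\subseteq\mathrm{Stab}(C)$.) Both statements are in fact false: in the Davis complex of the graph product over two non-adjacent vertices with groups $\mathbb{Z}/2$ (the infinite dihedral group acting on a subdivided line, where $(*)$ holds by the paper's graph-product argument), a vertex of the form $g\cdot\{1\}$ has trivial stabiliser and \emph{both} incident edges are maximal cubes with trivial stabiliser, so $\widehat{C'}$ is not well defined; and for generic $g$ neither edge contains the vertex $G_u$, which is its own canonical cube for the strictly larger stabiliser $G_u\cong\mathbb{Z}/2$. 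Canonical cubes of nested stabilisers can simply sit in disjoint parts of the complex.

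The repair is the paper's proof, which keeps your dimension count but abandons the canonical cube: do not assign cubes to stabilisers independently and hope they nest a posteriori, but build them along the given chain $P_1\subsetneq\cdots\subsetneq P_{n+2}$ so that they nest by construction. Start with any cube $C_{n+2}$ with $\mathrm{Stab}(C_{n+2})=P_{n+2}$ and, inductively, apply Lemma~\ref{lemma:removeC} to the pair $(C_{l+1},C')$ where $C'$ is any cube with $\mathrm{Stab}(C')=P_l$; this yields $C_l\supseteq C_{l+1}$ with $\mathrm{Stab}(C_l)=P_{l+1}\cap P_l=P_l$. One obtains $n+2$ nested cubes whose stabilisers are pairwise distinct, hence $n+2$ pairwise distinct nested cubes, which is impossible in dimension $n$.
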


\begin{proof}
Suppose by contradiction that we have $P_1 \subsetneq P_2 \subsetneq \cdots \subsetneq P_{n+2}$ in $\mathcal{P}$. Let $C_{n+2}$ be a cube with $P_{n+2}=\mbox{Stab}(C_{n+2})$. Then by Lemma~\ref{lemma:removeC} we can choose a cube $C_{n+1}\supseteq C_{n+2}$
such that $\mbox{Stab}(C_{n+1})=\mbox{Stab}(C_{n+2})\cap P_{n+1}=P_{n+1}$. Analogously, we can inductively define $C_l$ for $l=n,\ldots, 1$ with $C_{l}\supseteq C_{l+1}$ and $\mbox{Stab}(C_l)=P_l$. Since the dimension of $X$ is $n$, for some $l$ we have $C_l=C_{l+1}$, contradicting $P_l\neq P_{l+1}$.
\end{proof}

\begin{rem} 
\label{rem:stable}
Note that if a poset $\mathcal{P}$ has finite height and the intersection of any two elements of $\mathcal{P}$ belongs to $\mathcal{P}$, then $\mathcal{P}$ is \textit{stable under all intersections}, that is, the intersection of any family of elements of $\mathcal{P}$ belongs to $\mathcal{P}$. 
\end{rem} 

\begin{defin}
Let $G$ be a group acting 
on a cube complex $X$.
Suppose that the poset $\mathcal{P}$ of all cube stabilisers is stable under all intersections, and let $H$ be a subgroup of~$G$ that fixes a point of $X$. We denote by $P_H\in\mathcal P$ the intersection of the nonempty family of the elements of $\mathcal{P}$ containing $H$. 
\end{defin}

\begin{lem}\label{lem:stab}
Let $G$ be a group acting 
on a cube complex $X$. Suppose that the poset of all cube stabilisers is stable under all intersections, and let $H$ be a subgroup of~$G$ that fixes a point of $X$. Then $\mbox{Fix}(H) = \mbox{Fix}(P_H)$.
\end{lem}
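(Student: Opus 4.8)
The plan is to prove the non‑trivial inclusion $\mathrm{Fix}(H)\subseteq\mathrm{Fix}(P_H)$ by induction on the dimension of the \emph{carrier} of a point, i.e.\ the unique cube $C_x$ containing a given point $x$ in its relative interior; the reverse inclusion $\mathrm{Fix}(P_H)\subseteq\mathrm{Fix}(H)$ being immediate from $H\subseteq P_H$. So I would fix $x\in\mathrm{Fix}(H)$, set $d:=\dim C_x$, and assume that $P_H$ fixes every point of $\mathrm{Fix}(H)$ whose carrier has dimension $<d$. First I would record that, since the elements of $H$ are cellular isometries fixing $x$ and $C_x$ is the unique cube containing $x$ in its interior, $H$ preserves $C_x$; hence $\mathrm{Stab}(C_x)\in\mathcal P$ contains $H$, so $P_H\subseteq\mathrm{Stab}(C_x)$ and $P_H$ acts on $C_x$. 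Identifying $C_x$ with $[0,1]^d$, this action is by symmetries of the cube, in particular by affine maps.

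The key object is the polytope $P:=\mathrm{Fix}(H)\cap C_x$, which is the fixed‑point set of $H$ acting on $C_x$, equivalently the intersection of $C_x$ with an affine subspace; it is non‑empty, and $x\in\mathrm{relint}(P)$ because $x\in\mathrm{int}(C_x)$. I would then split into two cases. If $\dim P=0$ then $P=\{x\}$; but the barycentre of $C_x$ is fixed by every cellular isometry of $C_x$, hence lies in $P$, so $x$ is that barycentre, and therefore $P_H$ — which acts on $C_x$ by isometries — fixes $x$; this also disposes of the base case in which $x$ is a vertex. If $\dim P\ge 1$, then every vertex $w$ of $P$ lies in $\partial C_x$ (a vertex of $P$ lying in $\mathrm{int}(C_x)$ would lie in $\mathrm{relint}(P)$, impossible for $\dim P\ge1$), so its carrier $C_w$ is a proper face of $C_x$ and $\dim C_w<d$; since also $w\in P\subseteq\mathrm{Fix}(H)$, the inductive hypothesis gives that $P_H$ fixes $w$. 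As $P$ is the convex hull of its finitely many vertices and $P_H$ acts affinely on $C_x$, it follows that $P_H$ fixes all of $P$, in particular $x$, which closes the induction.

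The step I expect to require the most care is the case analysis on $P$, and it is where the hypotheses genuinely enter: the statement is \emph{false} cube by cube — the stabiliser of a single cube need not fix a point fixed by a subgroup it contains — so one must use both that $\mathrm{Fix}(H)$ is an honest fixed‑point set (so $\mathrm{Fix}(H)\cap C_x$ is the fixed polytope of the $H$‑action on $C_x$, whose vertices sit either at the barycentre or on proper faces, where the induction applies) and that $P_H$ is the intersection of \emph{all} cube stabilisers above $H$ (so that passing to a face $C_w$ keeps $P_H$ inside $\mathrm{Stab}(C_w)$, which is exactly what makes the inductive hypothesis available). From the standing hypothesis that the poset of cube stabilisers is stable under all intersections, all that is really used is the well‑definedness of $P_H$, i.e.\ that $P_H=\bigcap\{\mathrm{Stab}(C): H\subseteq\mathrm{Stab}(C)\}$.
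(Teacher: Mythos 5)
Your proof is correct, but it takes a genuinely different and more elaborate route than the paper's. The paper's argument is two lines long and stays entirely at the level of cubes and their stabilisers: the inclusion $\mathrm{Fix}(H)\supseteq\mathrm{Fix}(P_H)$ follows from $H\subseteq P_H$, and for the converse one observes that any cube $C$ in $\mathrm{Fix}(H)$ satisfies $H\subseteq\mathrm{Stab}(C)\in\mathcal P$, whence $P_H\subseteq\mathrm{Stab}(C)$ by the very definition of $P_H$ as the intersection of all cube stabilisers containing $H$; the paper then simply reads ``$P_H\subseteq\mathrm{Stab}(C)$'' as ``$C\subseteq\mathrm{Fix}(P_H)$''. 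That last identification is where the two arguments diverge: since $\mathrm{Stab}$ is the \emph{setwise} stabiliser, knowing $P_H\subseteq\mathrm{Stab}(C)$ does not formally give that $P_H$ fixes $C$ pointwise, and the paper is implicitly working with $\mathrm{Fix}$ at the level of stabilised cubes (equivalently, passing to the cubical subdivision). Your induction on the dimension of the carrier $C_x$, together with the convexity argument — the fixed polytope $P=\mathrm{Fix}(H)\cap C_x$ has its vertices either at the barycentre of $C_x$ or on proper faces, where the inductive hypothesis applies, and $P_H$ acts affinely — is precisely what is needed to upgrade setwise stabilisation by $P_H$ to pointwise fixing, so it proves the statement for the honest pointwise fixed-point set. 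What your approach buys is this extra rigour (and it makes visible exactly where the hypothesis on $P_H$ enters); what it costs is length, since the paper's proof is essentially definitional once the convention on $\mathrm{Fix}$ is accepted. All the individual steps of your argument (preservation of the carrier, affineness of cube symmetries, extreme points of $A\cap[0,1]^d$ lying on $\partial[0,1]^d$ when $\dim P\ge 1$, the barycentre case) check out.
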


\begin{proof}
	Since $H \subseteq P_H$, we have $\mbox{Fix}(H) \supseteq \mbox{Fix}(P_H)$. Now, let $C$ be a cube of $\mbox{Fix}(H)$. Then $H \subseteq \mbox{Stab}(C)\in\mathcal P$, and so by the definition of $P_H$, we also have $P_H \subseteq \mbox{Stab}(C)$, or in other words $C \subseteq \mbox{Fix}(P_H)$. Thus, we get the reverse inclusion $\mbox{Fix}(H) \subseteq \mbox{Fix}(P_H)$.
\end{proof}

\begin{proof}[Proof of Proposition \ref{prop:local}]
Suppose by contradiction that we have a chain $\mbox{Fix}(H_1) \supsetneq \mbox{Fix}(H_2) \supsetneq \cdots \supsetneq \mbox{Fix}(H_{n+2})$ of non-empty fixed-point sets of subgroups of $G$. Without loss of generality we can assume $H_1 \subsetneq H_2 \subsetneq \cdots\subsetneq H_{n+2}$. By Lemma~\ref{lemma:removeC}, Corollary~\ref{lem:intersection}, and Remark~\ref{rem:stable}, the poset~$\mathcal{P}$ of all cube stabilisers has height at most $n+1$ and is stable under all intersections. Consider then the chain $P_{H_1} \subseteq P_{H_2} \subseteq \cdots \subseteq P_{H_{n+2}}$ of elements of $\mathcal{P}$. For some $k$ we have $P_{H_k} = P_{H_{k+1}}$. Lemma~\ref{lem:stab} implies that for $i=k,k+1$ we have $\mbox{Fix}(H_i) = \mbox{Fix}(P_{H_i})$, which contradicts $\mbox{Fix}(H_k)\neq \mbox{Fix}(H_{k+1})$.
\end{proof}

\section{Application: graph products and Artin groups of type FC}

As a first application, we recover the stability of the strong Tits alternative under graph products, as first proved by Antol\'{\i}n--Minasyan \cite{AM}.

\begin{defin}
Let $\Gamma$ be a simplicial graph with vertex set $V(\Gamma)$, and let  $\mathcal{G}=\{ G_v \mid v \in V(\Gamma) \}$ be a collection of groups.  The \emph{graph product} $\Gamma \mathcal{G}$ is defined as follows:
\begin{center}
	$\Gamma \mathcal{G} \coloneqq \left( \underset{v \in V(\Gamma)}{\ast} G_v \right) / \langle \langle gh=hg, \ h \in G_u, g \in G_v, \{ u,v \} ~\mbox{an edge of}~\Gamma \rangle \rangle$,
\end{center}
\end{defin}

For an induced subgraph $\Lambda$ of $\Gamma$, the subgroup of $\Gamma\mathcal{G}$ generated by $\{G_v\}_{v \in \Lambda}$  is isomorphic to $\Lambda\mathcal{G}$ \cite{G}. We thus think of $\Lambda\mathcal{G}$ as a subgroup of $\Gamma\mathcal{G}$.

We first recall the construction of a cube complex associated to a graph product of groups, introduced in \cite{Davis}. 

\begin{defin}
	The \textit{Davis complex} of the graph product $\Gamma \mathcal{G} $ is defined as follows: 
	\begin{itemize}
		\item Vertices correspond to left cosets  $g\Lambda \mathcal{G}$ for $g\in \Gamma\mathcal{G}$ and $\Lambda \subseteq \Gamma$ a complete subgraph of $\Gamma$.
		\item We add an edge between vertices $g\Lambda_1\mathcal{G}$ and $g\Lambda_2\mathcal{G}$ whenever $g\in \Gamma\mathcal{G}$ and $\Lambda_1 \subset  \Lambda_2$ are complete subgraphs of $ \Gamma$ that differ by exactly one vertex.
		\item More generally,  for $g\in \Gamma\mathcal{G}$ and $\Lambda_1 \subset  \Lambda_2$ complete subgraphs of $ \Gamma$ that differ by exactly $k$ vertices, we add a $k$-cube spanned by the vertices $g\Lambda\mathcal{G}$ for all complete subgraphs $\Lambda_1 \subseteq\Lambda \subseteq \Lambda_2$.
	\end{itemize}
	The group $\Gamma\mathcal{G}$ acts on the vertices by left multiplication of left cosets, and this action extends to the entire Davis complex.
\end{defin}

In \cite[Thm 5.1]{Davis}, Davis showed that the Davis complex associated to a graph product of groups is a $\mathrm{CAT}(0)$ cube complex.

\begin{prop}
	Let $\Gamma$ be a finite simplicial graph, and let $\mathcal{G} \coloneqq \{G_u\}_{u \in V(\Gamma)}$ be a collection of groups that satisfy the strong Tits alternative. Then the graph product $\Gamma\mathcal{G}$ satisfies the strong Tits alternative.
\end{prop}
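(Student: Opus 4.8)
The plan is to verify that the action of $\Gamma\mathcal{G}$ on its Davis complex $X$ satisfies the hypotheses of Theorem~\ref{cor:Tits_combination_local}: namely that $X$ is finite-dimensional, that each vertex stabiliser satisfies the strong Tits alternative, and that property $(*)$ holds. Finite-dimensionality is immediate: since $\Gamma$ is finite, the dimension of $X$ equals the size of the largest complete subgraph of $\Gamma$. For the vertex stabilisers, note that the stabiliser of a vertex $g\Lambda\mathcal{G}$ is the conjugate $g(\Lambda\mathcal{G})g^{-1}$ of the standard parabolic subgroup $\Lambda\mathcal{G}$; since $\Lambda$ is complete, $\Lambda\mathcal{G}$ is the direct product $\prod_{v\in\Lambda}G_v$ of finitely many groups satisfying the strong Tits alternative, and the strong Tits alternative is stable under finite direct products (this follows, for instance, from Corollary~\ref{cor:Tits_chain_subgroups} applied to the obvious action of a direct product on a finite-dimensional CAT(0) cube complex, or can be taken as a known elementary fact). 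Hence vertex stabilisers satisfy the strong Tits alternative, and invoking Theorem~\ref{cor:Tits_combination_local} it remains only to establish property $(*)$.

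The main step is therefore to understand cube stabilisers in the Davis complex and to check property $(*)$. A $k$-cube of $X$ is determined by a group element $g$ together with a nested pair $\Lambda_1\subseteq\Lambda_2$ of complete subgraphs differing by $k$ vertices; up to translation by $g$ one computes that its stabiliser is $g\big(\Lambda_1\mathcal{G}\times \mathrm{Stab}_{\Lambda_2\mathcal{G}}(\Lambda_1\mathcal{G})\big)g^{-1}$, which works out to $g\big(\Lambda_1\mathcal{G}\times \prod_{v\in \Lambda_2\setminus\Lambda_1, \,v\sim\Lambda_1}G_v\big)g^{-1}$ — more precisely, the stabiliser of the cube spanned by $\Lambda_1\subseteq\Lambda\subseteq\Lambda_2$ is the set of elements of $\Lambda_2\mathcal{G}$ fixing the vertex $\Lambda_1\mathcal{G}$, conjugated by $g$. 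The key point is that such a stabiliser is again a conjugate of a standard parabolic subgroup $\Lambda'\mathcal{G}$ for a suitable complete subgraph $\Lambda'$ containing $\Lambda_1$. Two cubes $C,C'$ intersect in a common face, so after translating we may assume $C$ and $C'$ share a vertex $v_0$; then $\mathrm{Stab}(C)$ and $\mathrm{Stab}(C')$ are parabolic subgroups of the vertex group $\mathrm{Stab}(v_0)=\Lambda_0\mathcal{G}$, of the form $\Lambda_C\mathcal{G}$ and $\Lambda_{C'}\mathcal{G}$. In a graph product, the intersection of two standard parabolic subgroups is the standard parabolic on the intersection of the defining subgraphs (a standard fact, see \cite{AM,G}), so $\mathrm{Stab}(C)\cap\mathrm{Stab}(C') = (\Lambda_C\cap\Lambda_{C'})\mathcal{G}$. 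One then exhibits a cube $D\supseteq C$ whose stabiliser is exactly this group: take $D$ to be the cube at $v_0$ spanning the nested pair corresponding to $\Lambda_C\cap\Lambda_{C'}$, which contains $C$ because $\Lambda_C\cap\Lambda_{C'}\supseteq\Lambda_C$-face data of $C$ appropriately. This is property $(*)$.

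The main obstacle I anticipate is the bookkeeping in the previous paragraph: correctly identifying the stabiliser of a cube of the Davis complex as a conjugate of a standard parabolic subgroup, and then verifying that the cube $D$ realising the intersection both contains $C$ and has the right stabiliser. One must be careful about the distinction between the ``lower'' vertex $\Lambda_1\mathcal{G}$ and the ``upper'' vertex $\Lambda_2\mathcal{G}$ of a cube, since the stabiliser of a cube fixes all of its vertices and hence is contained in the intersection of the corresponding vertex stabilisers — it is the stabiliser of the lower vertex inside the upper parabolic. Once the dictionary ``cubes at a fixed vertex $\leftrightarrow$ complete subgraphs containing $\Lambda_0$, with stabiliser the corresponding parabolic'' is set up cleanly, property $(*)$ reduces to the intersection formula for parabolics in graph products, which is classical, and the proof concludes by citing Theorem~\ref{cor:Tits_combination_local}.
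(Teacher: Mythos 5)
Your strategy coincides with the paper's: act on the Davis complex, note that vertex stabilisers are conjugates of the finite direct products $\prod_{v\in V(\Lambda)}G_v$ (hence satisfy the strong Tits alternative), identify cube stabilisers with standard parabolics over complete subgraphs, and deduce property $(*)$ from the fact that intersections of such parabolics are again such parabolics. However, your first formula for the stabiliser of the cube with vertex set $\{g\Lambda\mathcal{G}:\Lambda_1\subseteq\Lambda\subseteq\Lambda_2\}$ is wrong. Since $\Lambda_2$ is complete, every $v\in\Lambda_2\setminus\Lambda_1$ is adjacent to all of $\Lambda_1$, so $g\bigl(\Lambda_1\mathcal{G}\times\prod_{v\in\Lambda_2\setminus\Lambda_1,\,v\sim\Lambda_1}G_v\bigr)g^{-1}$ is simply $g\Lambda_2\mathcal{G}g^{-1}$, the stabiliser of the top vertex; this is too big, because a nontrivial element of $G_v$ with $v\in\Lambda_2\setminus\Lambda_1$ moves the bottom vertex $g\Lambda_1\mathcal{G}$ to a different coset. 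Your subsequent ``more precisely'' description is the right one: the action preserves the subgraph $\Lambda$ labelling a vertex $g\Lambda\mathcal{G}$, so the setwise stabiliser of the cube fixes all of its vertices and equals $g\Lambda_1\mathcal{G}g^{-1}$ --- the parabolic on the bottom subgraph itself, not one merely ``containing $\Lambda_1$''. The first formula should be deleted and only this description kept.

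Two smaller points. When you translate so that $C$ and $C'$ share the vertex $\Lambda_0\mathcal{G}$, their stabilisers are a priori conjugates $h\Lambda_C\mathcal{G}h^{-1}$ with $h\in\Lambda_0\mathcal{G}$; to drop the conjugation you need the observation (made explicitly in the paper) that $\Lambda_C\mathcal{G}=\prod_{v\in V(\Lambda_C)}G_v$ is a direct factor of, hence normal in, $\Lambda_0\mathcal{G}=\prod_{v\in V(\Lambda_0)}G_v$. Finally, the inclusion ``$\Lambda_C\cap\Lambda_{C'}\supseteq\Lambda_C$'' justifying $D\supseteq C$ is written backwards: $D$ is the cube on the interval from $\Lambda_C\cap\Lambda_{C'}$ to the top subgraph of $C$, and it contains $C$ because $\Lambda_C\cap\Lambda_{C'}\subseteq\Lambda_C$. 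With these repairs your proof is essentially the paper's; note also that since everything takes place inside the single finite direct product $\Lambda_0\mathcal{G}$, the general intersection theorem for parabolics of graph products that you cite is not needed.
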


\begin{proof}
To verify property~$(*)$ of Theorem \ref{cor:Tits_combination_local}, we can assume that $C_0=C\cap C'$ has vertices $\Lambda\mathcal{G}$ for all complete subgraphs $\Lambda_1 \subseteq \Lambda \subseteq \Lambda_2$ for some $\Lambda_1 \subseteq \Lambda_2$ complete subgraphs of $ \Gamma$. In particular, the stabiliser of $C_0$ is $\prod_{u \in V(\Lambda_1)} G_u$. Furthermore, since each $G_{u'}$, with $u'\in V(\Lambda_1)$, is normalised by $\prod_{u \in V(\Lambda_1)} G_u$, the stabilisers of cubes containing $C_0$ are exactly the direct products of the form $\prod_{u' \in V(\Lambda')} G_{u'}$ for $\Lambda'$ a complete subgraph of $\Lambda_1$. This  implies property~$(*)$ of Theorem \ref{cor:Tits_combination_local}. Moreover, since the strong Tits alternative is stable under finite direct products, the stabilisers of vertices of the Davis complex satisfy the strong Tits alternative. It now follows from Theorem \ref{cor:Tits_combination_local} that $\Gamma\mathcal{G}$  satisfies the strong Tits alternative. 
\end{proof}

We conclude this note by proving the strong Tits alternative for Artin groups of type FC.
Recall the following construction from \cite{CD}. 

\begin{defin}
	The \textit{Deligne cube complex} of an Artin group $A_S$ of type FC is the cube complex defined as follows: 
	\begin{itemize}
		\item Vertices correspond to left cosets  $gA_{S'}$ for $g\in A_S$ and $S'\subseteq S$ with $A_{S'}$ spherical.
		\item  We add an edge between vertices $gA_{S_1}$ and $gA_{S_2}$ whenever $g\in A_S$ and $S_1 \subset  S_2$ are subsets of $S$ that differ by exactly one element.
		\item More generally,  for $g\in A_S$ and $S_1 \subset  S_2$  subsets of $S$ that differ by exactly $k$~elements, with $A_{S_2}$ spherical, we add a $k$-cube spanned by the vertices $gA_{S'}$ for all $S_1 \subseteq S'\subseteq S_2$.
	\end{itemize}
	The group $A_S$ acts on the vertices by left multiplication of left cosets, and this action extends to the entire Deligne cube complex.
\end{defin}

In \cite[Thm 4.3.5]{CD}, Charney--Davis showed that the Deligne cube complex of an Artin group of type FC is a $\mathrm{CAT}(0)$ cube complex.

\begin{proof}[Proof of Theorem \ref{thm:Tits_FC}]
	Let $A_S$ be an Artin group of type FC. From the definition of the Deligne cube complex, we have that the stabilisers of vertices of $X$ are exactly the conjugates of the standard parabolic subgroups $A_{S'}$ that are spherical. Such Artin groups are known to be linear in characteristic zero \cite{CW}, and thus satisfy the strong Tits alternative.
	
	 For property~$(*)$ in Theorem \ref{cor:Tits_combination_local}, let $C_0=C\cap C'$ be a cube of the Deligne cube complex. After replacing $C_0$ by a cube in its $A_S$-orbit, we can assume that the vertices of~$C_0$ correspond to the cosets $A_{S'}$ over $S_1 \subseteq S' \subseteq S_2$ for some $S_1\subseteq S_2\subseteq S$ with $A_{S_2}$ spherical. In particular, the stabiliser of $C$ is $A_{S_1}$. To obtain property~$(*)$, it suffices to show that for subsets $S', S''\subseteq S_1$ and $g \in A_{S_1}$, the intersection $A_{S'} \cap gA_{S''}g^{-1}$ is a standard parabolic subgroup of $A_{S'}$. This is a consequence of \cite[proofs of Prop 7.2 and Thm 9.5]{CGGW}. 
	   It now follows from Theorem~\ref{cor:Tits_combination_local} that $A_S$ satisfies the strong Tits alternative. 
\end{proof}

Note that here property $(*)$ for arbitrary cubes $C,C'$, which follows from Lemma~\ref{lemma:removeC}, amounts to saying that the intersection of two conjugates of standard parabolic subgroups is again a conjugate of a standard parabolic subgroup. This was proved independently by Rose Morris-Wright \cite{M}.

\begin{bibdiv}
\begin{biblist}

\bib{AM}{article}{
AUTHOR = {Antol\'{\i}n, Y.}
 AUTHOR = {Minasyan, A.},
     TITLE = {Tits alternatives for graph products},
   JOURNAL = {J. Reine Angew. Math.},
  FJOURNAL = {Journal f\"{u}r die Reine und Angewandte Mathematik. [Crelle's
              Journal]},
    VOLUME = {704},
      YEAR = {2015},
     PAGES = {55--83}}

\bib{BFH}{article}{
    AUTHOR = {Bestvina, M.},
     AUTHOR = {Feighn, M.},
      AUTHOR = {Handel, M.},
     TITLE = {The {T}its alternative for {${\rm Out}(F_n)$}. {I}.
              {D}ynamics of exponentially-growing automorphisms},
   JOURNAL = {Ann. of Math. (2)},
  FJOURNAL = {Annals of Mathematics. Second Series},
    VOLUME = {151},
      YEAR = {2000},
    NUMBER = {2},
     PAGES = {517--623}}
     
 \bib{BFH2}{article}{
   author={Bestvina, M.},
   author={Feighn, M.},
   author={Handel, M.},
   title={The Tits alternative for ${\rm Out}(F_n)$. II. A Kolchin type
   theorem},
   journal={Ann. of Math. (2)},
   volume={161},
   date={2005},
   number={1},
   pages={1--59}}        
     
\bib{CantatTits}{article}{
    AUTHOR = {Cantat, S.},
     TITLE = {Sur les groupes de transformations birationnelles des
              surfaces},
   JOURNAL = {Ann. of Math. (2)},
  FJOURNAL = {Annals of Mathematics. Second Series},
    VOLUME = {174},
      YEAR = {2011},
    NUMBER = {1},
     PAGES = {299--340}}
    
\bib{CS}{article}{
    AUTHOR = {Caprace, P.-E.},
     AUTHOR = {Sageev, M.},
     TITLE = {Rank rigidity for $\mathrm{CAT}(0)$ cube complexes},
   JOURNAL = {Geom. Funct. Anal.},
  FJOURNAL = {Geometric and Functional Analysis},
    VOLUME = {21},
      YEAR = {2011},
    NUMBER = {4},
     PAGES = {851--891}}

\bib{CD}{article}{
   author={Charney, R.},
   author={Davis, M. W.},
   title={The $K(\pi,1)$-problem for hyperplane complements associated to
   infinite reflection groups},
   journal={J. Amer. Math. Soc.},
   volume={8},
   date={1995},
   number={3},
   pages={597--627}}
   
\bib{Cap}{article}{
   author={Chatterji, Indira},
   author={Fern\'{o}s, Talia},
   author={Iozzi, Alessandra},
   title={The median class and superrigidity of actions on $\rm CAT(0)$ cube
   complexes},
   note={With an appendix by Pierre-Emmanuel Caprace},
   journal={J. Topol.},
   volume={9},
   date={2016},
   number={2},
   pages={349--400}}

\bib{CW}{article}{
    AUTHOR = {Cohen, A. M.},
    AUTHOR = {Wales, D. B.},
     TITLE = {Linearity of {A}rtin groups of finite type},
   JOURNAL = {Israel J. Math.},
  FJOURNAL = {Israel Journal of Mathematics},
    VOLUME = {131},
      YEAR = {2002},
     PAGES = {101--123}}
       
 \bib{CGGW}{article}{
    AUTHOR = {Cumplido, M.},
    author = {Gebhardt, V.},
    author = {Gonz\'{a}lez-Meneses, J.},
    author = {Wiest, B.},
   title={On parabolic subgroups of Artin-Tits groups of spherical type},
   eprint={arXiv:1712.06727},
   status={accepted}
   journal={Adv. Math.}
   date={2017}}
    
\bib{Davis}{article}{   
        AUTHOR = {Davis, M. W.},
     TITLE = {Buildings are {${\rm CAT}(0)$}},
 BOOKTITLE = {Geometry and cohomology in group theory ({D}urham, 1994)},
    SERIES = {London Math. Soc. Lecture Note Ser.},
    VOLUME = {252},
     PAGES = {108--123},
 PUBLISHER = {Cambridge Univ. Press, Cambridge},
      YEAR = {1998}}
  
\bib{F}{article}{
   author={Fern\'{o}s, Talia},
   title={The Furstenberg-Poisson boundary and ${\rm CAT}(0)$ cube
   complexes},
   journal={Ergodic Theory Dynam. Systems},
   volume={38},
   date={2018},
   number={6},
   pages={2180--2223}}  
       
\bib{Gen}{article}{
    AUTHOR = {Genevois, A.},
   title={Cubical-like geometry of quasi-median graphs and applications to geometric group theory},
   eprint={arXiv:1712.01618},
   date={2017}}      

\bib{G}{article}{
   author={Green, E.},
   title={Graph products of groups},
   status={Ph.D. thesis},
   date={1990},
   journal={University of Leeds}} 
   
\bib{Haettel}{article}{
	author={Haettel, T.},
	title={Virtually cocompactly cubulated Artin--Tits groups},
	eprint={arXiv:1509.08711},
	date={2017}}
	
\bib{Haettel2}{article}{
   author={Haettel, T.},
   title={Extra-large Artin groups are $\mathrm{CAT}(0)$ and acylindrically hyperbolic},
   eprint={arXiv:1905.11032},
   date={2019}}

\bib{HKS}{article}{
    AUTHOR = {Haettel, T.},
     AUTHOR = { Kielak, D.},
      AUTHOR = {Schwer, P.},
     TITLE = {The 6-strand braid group is {${\rm CAT}(0)$}},
   JOURNAL = {Geom. Dedicata},
  FJOURNAL = {Geometriae Dedicata},
    VOLUME = {182},
      YEAR = {2016},
     PAGES = {263--286}}
		
\bib{HJP}{article}{
   author={Huang, J.},
   author={Jankiewicz, K.},
   author={Przytycki, P.},
   title={Cocompactly cubulated 2-dimensional Artin groups},
   journal={Comment. Math. Helv.},
   volume={91},
   date={2016},
   number={3},
   pages={519--542}}

   \bib{HO1}{article}{
    AUTHOR = {Huang, J.},
    author = {Osajda, D.}
   title={Large-type Artin groups are systolic},
   eprint={arXiv:1706.05473},
   date={2017}}
   
     \bib{HO2}{article}{
    AUTHOR = {Huang, J.},
    author = {Osajda, D.}
   title={Helly meets Garside and Artin},
   eprint={arXiv:1904.09060},
   date={2019}}

\bib{Iv}{article}{
    AUTHOR = {Ivanov, N. V.},
     TITLE = {Algebraic properties of the {T}eichm\"uller modular group},
   JOURNAL = {Dokl. Akad. Nauk SSSR},
  FJOURNAL = {Doklady Akademii Nauk SSSR},
    VOLUME = {275},
      YEAR = {1984},
    NUMBER = {4},
     PAGES = {786--789}}

\bib{vdL}{article}{
   author={van der Lek, H.},
   title={The homotopy type of complex hyperplane complements},
   status={Ph.D. thesis},
   date={1983},
   journal={University of Nijmegan}}

   \bib{MP}{article}{
    AUTHOR = {Martin, A.},
    AUTHOR = {Przytycki, P.},
     TITLE = {Acylindrical actions for two-dimensional Artin groups of hyperbolic type},
   eprint={arXiv:1906.03154}
   YEAR = {2019}}

\bib{McC}{article}{
    AUTHOR = {McCarthy, J.},
     TITLE = {A ``{T}its-alternative'' for subgroups of surface mapping
              class groups},
   JOURNAL = {Trans. Amer. Math. Soc.},
  FJOURNAL = {Transactions of the American Mathematical Society},
    VOLUME = {291},
      YEAR = {1985},
    NUMBER = {2},
     PAGES = {583--612}}
     
\bib{M}{article}{
AUTHOR = {Morris-Wright, Rose},
     TITLE = {Parabolic subgroups in FC type Artin groups},
   eprint = {arXiv:1906.07058},
   YEAR = {2019}}
     
\bib{NR}{article}{
   author={Niblo, G. A.},
   author={Reeves, L. D.},
   title={The geometry of cube complexes and the complexity of their
   fundamental groups},
   journal={Topology},
   volume={37},
   date={1998},
   number={3},
   pages={621--633}}

\bib{OP}{article}{
    AUTHOR = {Osajda, D.},
    AUTHOR = {Przytycki, P.},
     TITLE = {Tits Alternative for groups acting properly on $2$-dimensional recurrent complexes},
   eprint = {arXiv:1904.07796},
   YEAR = {2019}}

\bib{SW}{article}{
    AUTHOR = {Sageev, M.},
    AUTHOR = {Wise, D. T.},
     TITLE = {The {T}its alternative for {${\rm CAT}(0)$} cubical complexes},
   JOURNAL = {Bull. London Math. Soc.},
  FJOURNAL = {The Bulletin of the London Mathematical Society},
    VOLUME = {37},
      YEAR = {2005},
    NUMBER = {5},
     PAGES = {706--710}}

\bib{Titsalternative}{article}{
    AUTHOR = {Tits, J.},
     TITLE = {Free subgroups in linear groups},
   JOURNAL = {J. Algebra},
  FJOURNAL = {Journal of Algebra},
    VOLUME = {20},
      YEAR = {1972},
     PAGES = {250--270}}

\end{biblist}
\end{bibdiv}

\end{document}